\documentclass[11pt,a4paper]{article}
\usepackage{epsfig}
\usepackage[numbers]{natbib}
\usepackage{amsmath, amsthm, amssymb}
\usepackage{epsfig}
\usepackage[dvipdfm]{hyperref}
\usepackage[margin=3cm, top=3.5cm, bottom=3.5cm]{geometry}

\newcommand\blfootnote[1]{%
  \begingroup
  \renewcommand\thefootnote{}\footnote{#1}%
  \addtocounter{footnote}{-1}%
  \endgroup
}
\newtheorem{theorem}{Theorem}[section]
\newtheorem{lemma}{Lemma}[section]

\numberwithin{equation}{section}
\everymath{\displaystyle}
\allowdisplaybreaks

\title{Green's function for the  viscoelastic and isotropic half-space}
\date{}
\author{
Tsviatko Rangelov\thanks{Institute of Mathematics and Informatics, Bulgarian Academy of Sciences, 1113, Sofia, Bulgaria}
 \and Petia S. Dineva\thanks{Institute of Mechanics, Bulgarian Academy of Sciences, 1113, Sofia, Bulgaria}
 \and George D. Manolis\thanks{Department of Civil Engineering, Aristotle University,
 GR-54124, Thessaloniki, Greece}
 \footnotemark[1]
}

\begin{document}
\maketitle              
\blfootnote{Corresponding author: T. Rangelov, rangelov@math.bas.bg}

\begin{abstract}
\noindent
A dynamic 3D Green's function for the homogeneous, isotropic and viscoelastic (of the Zener type) half-space is derived in a closed form.  The results obtained here can be used as either stand-alone solutions for simple problems or in conjunction with a boundary integral equation formulations to account for complex boundary conditions. In the later case, mesh-reducing boundary element formulations can be constructed as an alternative method for numerical implementation purposes.
\end{abstract}

{\bf Keywords} Viscoelasticity, Half-space, Green`s function, Boundary elements

{\bf Math. Subj. Class.} 35Q74, 35J08

\section{Introduction}
\label{sec1}
Wave propagation in either the elastic full- or half-space is the cornerstone of elastodynamics and forms the basis for solving various categories of wave scattering problems in physics, solid mechanics, material science, geophysics and earthquake engineering. More specifically, the construction of fundamental solutions is a key step for solving the following categories of problems:
\begin{itemize}
\item[(a)] Wave propagation in a half-space with surface relief and containing layers, cracks, faults, cavities and inclusions;
\item[(b)] soils-structure interaction problems for half-space with different type of engineering construction such as foundations, buildings, bridges, dams, etc.;
\item[(c)] non-destructive testing evaluation of composite materials with different type of defects; and
\item[(d)] waveform inversion in geophysics.
\end{itemize}
After several decades of development, the Boundary Element Method (BEM) has found a firm footing in the area of numerical methods for  solution of partial differential equations describing elastic wave motion in semi-infinite domains. Comparing with the more popular, domain-based numerical techniques such as the Finite Element Method (FEM) and the Finite Difference Method (FDM), the BEM is a boundary-only approach, meaning that the corresponding boundary-value problem (BVP) is described by integral equations formulated along the periphery of the object  under consideration. The conventional BEM is based on the use of a fundamental solution to the governing equation, while the non-conventional BEM  uses the half-space Green`s function, which  in addition to the fundamental solution, satisfies traction-free boundary conditions, see \cite{Do93} and \cite{MDRW17}. Fundamental solutions and/or Green`s functions serve as kernels in integral equation formulations and are redefined for a source $x$ (where the load is applied) plus a receiver $\xi$ (where the response is measured) configuration. The key role played by BEM formulations in computational mechanics is to reduce a BVP formulated by the governing partial differential equation together with boundary and initial conditions into a system of integral equations through the use of reciprocal theorems. It is for this reason that the recovery of fundamental solutions and/or Green`s function in analytical form is so important. A BEM formulation based on the half-space Green`s function is more desirable because it becomes unnecessary to discretize the free surface and this results in a numerical scheme with reduced computational time and memory storage requirements.

From a historical perspective, the problems involving half-space Green`s functions concern the early twentieth century, when in \cite{La04}  and later in \cite{Mi60}  have been investigated the response of a homogeneous, elastic and isotropic half-space subjected to concentrated vertical and horizontal loads at either its surface or its interior. A detailed state-of-the-art regarding the literature on half-space Green`s functions can be found in \cite{Ka06} and  \cite{DMW19}. Furthermore, frequency-dependent half-space Green`s functions were derived in \cite{Jo74}, \cite{Pa87}, \cite{TAG01} and \cite{YP16}. In more detail, the analytical expressions for the singular displacements along the free surface of the half-space are represented in terms of semi-infinite integrals. In most cases, these integrals cannot be evaluated analytically, while their numerical solution is difficult and time-consuming due to the oscillating behavior of the integrands and because of the presence of singularities in these Green`s functions.

Obviously, the use of half-space Green`s functions is unavoidable when considering advanced, high-performance BEM formulations for problems involving semi-infinite media. This fact motivated the authors to derive an analytical solution for a half-space Green`s function for a homogeneous, viscoelastic and isotropic  medium  that is convenient for implementation in the BEM-based models.
\section{Mathematical statement of the problem}
\label{sec2}
The aim of the present work is to derive a dynamic Green's function for viscoelastic, isotropic half-space under a point force in space and time that satisfies the traction-free boundary condition along its boundary.

Dynamic equilibrium equations and boundary conditions in the half-space $\mathbb{R}^3_-=\{x=(x_1,x_2,x_3), x_3<0\}$ are considered. At first, the elasticity tensor is denoted as  $C_{jkpq}=\lambda\delta_{jk}\delta_{pq}+\mu(\delta_{jp}\delta_{kq}+\delta_{jq}\delta_{kp})$, where $\lambda, \mu$ are the Lam\'e constants, with $\mu>0, \lambda+\mu>0$ and indexes vary $1, 2, 3$.

Next, we introduce the Zener constitutive relation for viscoelasticity with fractional derivatives of order $\alpha$ in the frequency domain, see \cite{Ma10}, \cite{RDM18} and  \cite{MM16}:
\begin{equation}
\label{eq001} (1+p(i\omega)^\alpha)\sigma_{ij}(x,\omega)=C_{ijkl}(1+q(i\omega)^\alpha)\varepsilon_{kl}(x,\omega).
\end{equation}
In the above, $p\geq0$, $q\geq0$, $\alpha\in[0,1]$ are the model coefficients, $\sigma_{ij}(x,\omega)$ is the stress tensor and $\varepsilon_{kl}(x,\omega)$ is the strain tensor.

The Green`s function $g(x,\xi,\omega)$ for this problem must satisfy the following boundary-value problem:
\begin{equation}
\label{eq1a}
L(g)=\sigma_{ijm,i}(x,\xi,\omega)+\rho\tilde{\omega}^2g_{jm}(x,\xi,\omega)=-\delta_{jm}\delta(x-\xi), \ \ x,\xi\in \mathbb{R}^3_-,
\end{equation}
\begin{equation}
\label{eq1b}
T^g_{jm}(x,\xi,\omega)=0  \hbox{ on } x_3=0,
\end{equation}
In the above, $\rho$ is the material density, $\tilde{\omega}^2=\frac{1+p(i\omega)^\alpha}{1+q(i\omega)^\alpha}\omega^2$ is the square of the damped frequency and $\sigma_{ijm}(x,\xi,\omega)=C_{ijkl}g_{km,l}(x,\xi,\omega)$ is the viscoelastic stress tensor. Furthermore, $T^g_{jm}=\sigma_{ijm}n_i=\sigma_{3jm}$ is the traction tensor with $n=(0,0,1)$ the outward pointing, normal vector to $x_3=0$, and $\delta(x,\xi)$ is Dirac`s delta function. Note that comma  denotes partial differentiation with respect to the space coordinates, the summation convention for repeated indexes is implied and Green`s tensor $g(x,\xi,\omega)$  satisfies the Sommerfeld radiation condition along lines parallel to $\{x_3=0\}$.

We start with a fundamental solution $u^\ast(x,\xi,\omega)$, i.e., a solution of Eq. (\ref{eq1a}) and recover in Section \ref{sec3} a smooth function $w(x,\xi\omega)$ which solves the homogeneous part of Eq. (\ref{eq1a}), such that $g(x,\xi,\omega)=u^\ast(x,\xi,\omega)+w(x,\xi,\omega)$ satisfies the boundary conditions in Eq. (\ref{eq1b}).
\section{Fundamental solution}
\label{sec3}

Assume that matrix-valued function $u^\ast(x,\xi,\omega)$  is a fundamental solution  of
Eq.  (\ref{eq1a}), i.e.,
\begin{equation}
\label{eq01}
L(u^\ast)=\sigma^\ast_{ijm,i}(x,\xi,\omega)+\rho\tilde{\omega}^2u^\ast_{jm}(x,\xi,\omega)=-\delta_{jm}\delta(x-\xi), \ \ x,\xi\in \mathbb{R}^3
\end{equation}
 where $\sigma^\ast_{ijm}(x,\xi,\omega) = C_{ijkl}u^\ast_{km,l}(x,\xi,\omega)$.

A fundamental solution is not unique, since it is defined with respect to Eq. (\ref{eq1a}) with a zero right-hand side. The derivation and the form of such a fundamental solution  is given in \cite{ES75}, see also \cite{Do93}, \cite{MS96} and  \cite{ZG98}, Appendix C.

The wave numbers for pressure and shear waves are now defined as $k_1=\frac{\tilde{\omega}}{C_1}=\tilde{\omega}\sqrt{\frac{\rho}{\lambda+2\mu}}$ and $k_2=\frac{\tilde{\omega}}{C_2}=\tilde{\omega}\sqrt{\frac{\rho}{\mu}}$, respectively, where the radial distance between two field points is $r(x,\xi)=\sqrt{(x_1-\xi_1)^2+(x_2-\xi_2)^2+(x_3-\xi_3)^2}$. By defining functions $\tilde{\varphi}_j(r,\omega)= \frac{e^{ik_jr}}{r}, j=1,2$ the fundamental solution of Eq. (\ref{eq01}) is a $3\times3$ matrix-valued function with components:
$$
\tilde{u}(x,\xi,\omega)_{ij}=\frac{1}{4\pi\rho\omega^2}\left[\frac{\tilde{\varphi}_2(x,\xi,\omega)}{r}-\frac{\tilde{\varphi}_1(x,\xi,\omega)}{r}\right]_{,ij}
+\frac{1}{4\pi\mu}\frac{\tilde{\varphi}_2(x,\xi,\omega)}{r}\delta_{ij}.
$$
The aforementioned functions $-\frac{1}{4\pi}\tilde{\varphi}_j(x,\xi,\omega)$ are the fundamental solutions of the Helmholtz equation, see \cite{Vl71}, \S 6.5.
\begin{equation}
\label{eq3}
\Delta \tilde{\varphi}_j(x,\xi,\omega)+k^2_j\tilde{\varphi}_j(x,\xi,\omega)=\delta(x-\xi),
\end{equation}
where $\Delta$ is the Laplace operator in $\mathbb{R}^3$, $\Delta=\frac{\partial}{\partial x_1}+\frac{\partial}{\partial x_2}+\frac{\partial}{\partial x_3}$.

Next, we will find another class of functions $\tilde{\varphi}_j(x,\xi)$ which solve Eq. (\ref{eq3}), but are more convenient for the derivation of Green`s functions. 
\begin{lemma}
\label{lem1}
Let $\beta=\sqrt{\eta^2_1+\eta^2_2-k^2}$, then function
$$
\varphi(x,\xi,\omega)=A\int_{\mathbb{R}^2}\frac{e^{-\beta|x_3-\xi_3|}}{\beta}e^{i[
\eta_1(x_1-\xi_1)+\eta_2(x_2-\xi_2)]}d\eta_1d\eta_2,A= \frac{1}{4\pi},
$$
is a solution of Eq. (\ref{eq3}).
\end{lemma}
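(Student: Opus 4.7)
The natural approach is to verify~\eqref{eq3} by differentiating the parametric integral defining $\varphi$ term by term. The first observation is that viscoelastic damping forces $\tilde\omega$, and hence $k$, to be complex with nonzero imaginary part; so the branch $\beta=\sqrt{\eta_1^2+\eta_2^2-k^2}$ selected with $\mathrm{Re}\,\beta>0$ never vanishes on $\mathbb{R}^2$ and grows linearly in $|\eta|$. Thus $1/\beta$ is smooth and bounded, and for fixed $x_3\neq\xi_3$ the factor $e^{-\beta|x_3-\xi_3|}$ decays exponentially in $|\eta|$, so the double integral is absolutely convergent and $\varphi$ is smooth off $\{x_3=\xi_3\}$.

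One would then differentiate under the integral sign. The tangential derivatives $\partial_{x_1}^2+\partial_{x_2}^2$ act on the plane wave $e^{i[\eta_1(x_1-\xi_1)+\eta_2(x_2-\xi_2)]}$ and pull down $-(\eta_1^2+\eta_2^2)$. A standard distributional computation gives
$$
\partial_{x_3}^2\!\left(\frac{e^{-\beta|x_3-\xi_3|}}{\beta}\right)=\beta\,e^{-\beta|x_3-\xi_3|}-2\,\delta(x_3-\xi_3),
$$
the smooth piece arising from iterating the classical derivative (using $\mathrm{sgn}^2=1$) and the Dirac term from the jump of $\mathrm{sgn}(x_3-\xi_3)$ at the origin. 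Assembling all second derivatives with the $k^2\varphi$ term and invoking the defining identity $\beta^2=\eta_1^2+\eta_2^2-k^2$, the smooth coefficient $-(\eta_1^2+\eta_2^2)+\beta^2+k^2$ vanishes identically, so all bulk contributions cancel.

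What survives is only the boundary term
$$
(\Delta+k^2)\varphi=-2A\,\delta(x_3-\xi_3)\int_{\mathbb{R}^2}e^{i[\eta_1(x_1-\xi_1)+\eta_2(x_2-\xi_2)]}\,d\eta_1\,d\eta_2,
$$
and the inner integral is, as a tempered distribution, $(2\pi)^2\delta(x_1-\xi_1)\delta(x_2-\xi_2)$. Combined with $\delta(x_3-\xi_3)$ this reconstructs the three-dimensional Dirac mass $\delta(x-\xi)$, establishing~\eqref{eq3} up to the overall normalization.

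The main technical obstacle is justifying $\partial_{x_3}^2$ inside the integral near $x_3=\xi_3$, where the $\eta$-integrand is only conditionally convergent. The cleanest way around this is to apply the partial Fourier transform in $(x_1,x_2)$ to~\eqref{eq3} at the outset: the PDE reduces to the one-dimensional ODE
$$
(\partial_{x_3}^2-\beta^2)\hat\varphi(\eta,x_3)=e^{-i(\eta_1\xi_1+\eta_2\xi_2)}\,\delta(x_3-\xi_3),
$$
whose unique decaying Green's function (singled out by $\mathrm{Re}\,\beta>0$) is a constant multiple of $-(1/2\beta)e^{-\beta|x_3-\xi_3|}$. Inverting the transform then recovers the representation claimed for $\varphi$ and simultaneously certifies that it solves the Helmholtz equation.
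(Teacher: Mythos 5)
Your proof is correct and follows essentially the same route as the paper's: differentiate under the integral sign, apply the distributional identity for $\partial_{x_3}^2$ acting on $e^{-\beta|x_3-\xi_3|}$, note that the smooth coefficient cancels because $\beta^2=\eta_1^2+\eta_2^2-k^2$, and identify the surviving jump term with a multiple of $\delta(x-\xi)$. You are, however, more careful than the paper on two counts. First, your observation that viscoelastic damping makes $k$ complex, so that $\mathrm{Re}\,\beta>0$ and the integral converges absolutely away from $\{x_3=\xi_3\}$, supplies the justification for differentiating under the integral that the paper omits; your closing reduction to the one-dimensional ODE $(\partial_{x_3}^2-\beta^2)\hat\varphi=c\,\delta(x_3-\xi_3)$ via the partial Fourier transform is an independent and cleaner certification of the same fact. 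Second, your normalization is the correct one: keeping the $(2\pi)^2$ from the two-dimensional Fourier inversion and the correct sign of the jump term gives $(\Delta+k^2)\varphi=-8\pi^2A\,\delta(x-\xi)$, so the constant should be $A=-\tfrac{1}{8\pi^2}$. The lemma as printed asserts $A=\tfrac{1}{4}\pi^{-1}$, while the paper's own proof (which carries a sign typo in the exponent and drops the $(2\pi)^2$ factor in the inversion formula) ends by setting $A=-\tfrac{1}{8}\pi^{-1}$; your ``up to the overall normalization'' caveat correctly flags this inconsistency rather than reproducing it.
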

\begin{proof}
Function $\varphi(x,\xi,\omega)$ is constructed using the inverse Fourier transform, which is defined in $\mathbb{R}^2$ as follows:
$$
\textit{F}^{-1}(h)(y)=\frac{1}{2\pi}\int_{\mathbb{R}^2}h(\zeta)e^{i(\zeta_1 y_1+\zeta_2 y_2)}d\zeta_1d\zeta_2.
$$
where the  Fourier transform is
$$
\textit{F}(f)(\zeta)=\frac{1}{2\pi}\int_{\mathbb{R}^2}f(y)e^{-i(\zeta_1 y_1+\zeta_2 y_2)}dy_1dy_2.
$$
Calculations give the following results for the derivatives of $\varphi(x,\xi,\omega)$:
$$
\begin{array}{ll}
&\varphi_{,1}(x,\xi)=A\int_{\mathbb{R}^2}i\eta_1\frac{e^{\beta|x_3-\xi_3|}}{\beta}e^{i[
\eta_1(x_1-\xi_1)+\eta_2(x_1-\xi_2)]}d\eta_1d\eta_2;
\\
&\varphi_{,11}(x,\xi)=A\int_{\mathbb{R}^2}(-\eta^2_1)\frac{e^{\beta|x_3-\xi_3|}}{\beta}e^{i[
\eta_1(x_1-\xi_1)+\eta_2(x_1-\xi_2)]}d\eta_1d\eta_2,
\\
&\varphi_{,2}(x,\xi)=A\int_{\mathbb{R}^2}i\eta_2\frac{e^{\beta|x_3-\xi_3|}}{\beta}e^{i[
\eta_1(x_1-\xi_1)+\eta_2(x_1-\xi_2)]}d\eta_1d\eta_2;
\\
&\varphi_{,22}(x,\xi)=A\int_{\mathbb{R}^2}(-\eta^2_2)\frac{e^{\beta|x_3-\xi_3|}}{\beta}e^{i[
\eta_1(x_1-\xi_1)+\eta_2(x_1-\xi_2)]}d\eta_1d\eta_2,
\\
&\varphi_{,3}(x,\xi)=A\int_{\mathbb{R}^2}\beta\frac{e^{\beta|x_3-\xi_3|}}{\beta}\hbox{sgn}(x_3-\xi_3)e^{i[
\eta_1(x_1-\xi_1)+\eta_2(x_1-\xi_2)]}d\eta_1d\eta_2;
\\
&\varphi_{,33}(x,\xi)=A\int_{\mathbb{R}^2}\left[\beta^2\frac{e^{\beta|x_3-\xi_3|}}{\beta}\hbox{sgn}^2(x_3-\xi_3)+\beta\frac{e^{\beta|x_3-\xi_3|}}
{\beta}2\delta(x_3-\xi_3)\right]
\\
&\times e^{i[
\eta_1(x_1-\xi_1)+\eta_2(x_1-\xi_2)]}d\eta_1d\eta_2.
\end{array}
$$
In the above, the  derivative of the signum function is defined as $\hbox{sgn}(x_3-\xi_3)_{,3}=2\delta(x_3-\xi_3)$ and $\hbox{sgn}^2(x_3-\xi_3)=1$.

By reconstituting the Helmholtz operator in Eq. (\ref{eq3}) using the above results, we obtain
$$\begin{array}{ll}
&\Delta\varphi(x,\xi)+k^2\varphi(x,\xi)=A\int_{\mathbb{R}^2}\left[\left(\beta^2-\eta^2_1-\eta^2_2+k^2\right)\frac{e^{\beta|x_3-\xi_3|}}{\beta}
+e^{\beta|x_3-\xi_3|}2\delta(x_3-\xi_3)\right]
\\
&\times e^{i[\eta_1(x_1-\xi_1)+\eta_2(x_1-\xi_2)]}d\eta_1d\eta_2=2A\delta(x-\xi),
\end{array}
$$
Note that the property of the $\delta$ function, $e^{\beta|x_3-\xi_3|}2\delta(x_3-\xi_3)=2\delta(x_3-\xi_3)$, was used, as well as the following property of the inverse Fourier transform:
$$
A\int_{\mathbb{R}^2}\delta(x_3-\xi_3)e^{i[\eta_1(x_1-\xi_1)+\eta_2(x_1-\xi_2)]}d\eta_1d\eta_2=A\delta(x_1-\xi_1)\delta(x_2-\xi_2)\delta(x_3-\xi_3).
$$
By setting constant $A=-\frac{1}{8\pi}$, we finally obtain Eq. (\ref{eq3}).

\end{proof}
\section{Correction term}
\label{sec4}
By using the Lemma \ref{lem1} for a $3\times3$ matrix-valued fundamental solution $u^\ast(x,\xi,\omega)$ with components
$$
u^\ast(x,\xi,\omega)_{ij}=\frac{1}{4\pi\mu}\left\{\left[\frac{1}{k_2^2}\varphi_2(x,\xi,\omega)-\frac{1}{k_2^2}\varphi_1(x,\xi,\omega)\right]_{,ij} +\varphi_2(x,\xi,\omega)\delta_{ij}\right\}
$$
 a correction term $w(x,\xi,\omega)$, $x,\xi \in \mathbb{R}^3_-$ satisfying the following two conditions is derived:
\begin{itemize}
\item[(a)] The $3\times3$ matrix valued function  $w(x,\xi,\omega)$  is a general solution of matrix equation (\ref{eq01}) with a zero right-hand side, i.e., $L(w)=0$;
\item[(b)] The tractions derived from displacements $w (x,\xi,\omega)$ on $x_3=0$ minus the tractions derived from displacements $u^\ast (x,\xi,\omega)$ on $x_3=0$ equilibrate, i.e., condition  Eq. (\ref{eq1b}) for
 $g(x,\xi,\omega)=u^\ast(x,\xi,\omega)+w(x,\xi,\omega)$ holds.
\end{itemize}
Starting with (a): Let $f$ be a $3\times1$ vector-valued function such that
\begin{equation}
\label{eq02}
f(x)=\int_{\mathbb{R}^2}\nu(\eta_1,\eta_2,\beta)e^{\beta(x_3+\xi_3)}e^{i[\eta_1(x_1-\xi_1)+\eta_2(x_2-\xi_2)]}d\eta_1d\eta_2,
\end{equation}
The viscoelasticity operator $M$ in coordinate notations reads as follows:
$$
\begin{array}{ll} &M(\partial_1,\partial_2,\partial_3,\omega)
\\
&=\left(\begin{array}{l}(\lambda+\mu)\partial^2_{x_1^2}+\mu(\partial^2_{x_1^2}+\partial^2_{x_2^2}+\partial^2_{x_3^2})+\rho\omega^2
\quad (\lambda+\mu)\partial^2_{x_1x_2}\quad(\lambda+\mu)\partial^2_{x_1x_3}
\\
(\lambda+\mu)\partial^2_{x_1x_2}\quad(\lambda+\mu)\partial^2_{x_2^2}+\mu(\partial^2_{x_1^2}+\partial^2_{x_2^2}+\partial^2_{x_3^2})+\rho\omega^2
\quad(\lambda+\mu)\partial^2_{x_2x_3}
\\
(\lambda+\mu)\partial^2_{x_1x_3}\quad(\lambda+\mu)\partial^2_{x_2x_3}\quad(\lambda+\mu)\partial^2_{x_3^2}+\mu(\partial^2_{x_1^2}+\partial^2_{x_2^2}+\partial^2_{x_3^2})+\rho\omega^2
\end{array}\right)
\end{array},
$$
where $\partial^2_{x_ix_j}=\frac{\partial^2}{\partial x_i\partial x_j}$. The vector function $f(x)$ is a solution of equation $M(\partial_1,\partial_2,\partial_3,\omega)f(x)=0$ if $\beta$ is an eigenvalue of matrix $M(\eta_1,\eta_2,\beta,\omega)$ and $\nu(\eta_1,\eta_2,\beta)$ is the corresponding eigenvector.
\begin{lemma}
\label{lem2}
The three eigenvalues of matrix $M(\eta_1,\eta_2,\beta,\omega)$ with respect to parameter $\beta$ are $\beta_1=\sqrt{\eta_1^2+\eta_2^2-k_1^2}$, $\beta_2=\beta_3=\sqrt{\eta_1^2+\eta_2^2-k_2^2}$ and the corresponding eigenvectors are:
\begin{equation}
\label{eq03}
\nu^1=\left(-\eta_1, -\eta_2, i\beta_1\right), \quad \nu^2=\left(-\beta_2, 0, i\eta_1\right), \quad \nu^3=\left(-\eta_2,\eta_1, 0\right).
\end{equation}
\end{lemma}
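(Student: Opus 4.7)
The plan is to substitute the separated ansatz $f(x)=\nu\,e^{\beta x_3}\,e^{i(\eta_1 x_1+\eta_2 x_2)}$ into the PDE $M(\partial_1,\partial_2,\partial_3,\omega)f=0$. Under the symbolic replacements $\partial_{x_1}\to i\eta_1$, $\partial_{x_2}\to i\eta_2$, $\partial_{x_3}\to\beta$, the PDE collapses to the purely algebraic system $\widetilde M(\eta_1,\eta_2,\beta,\omega)\,\nu=0$. Admissible values of $\beta$ are therefore exactly the roots (in $\beta$) of $\det\widetilde M=0$, and for each such root the eigenvector is any nonzero element of $\ker\widetilde M$.

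Rather than expanding the $3\times 3$ determinant head-on, I would exploit the Lam\'e structure. Set $p=(i\eta_1,i\eta_2,\beta)$ and $r^2=\eta_1^2+\eta_2^2$, so that $p\cdot p=\beta^2-r^2$. Substituting the ansatz turns $M(\partial)f=0$ into
$$
\bigl[\mu(\beta^2-r^2)+\rho\omega^2\bigr]\nu \;+\; (\lambda+\mu)(p\cdot\nu)\,p \;=\; 0.
$$
This splits cleanly into two branches. \emph{Shear branch} ($p\cdot\nu=0$): the equation reduces to $\mu(\beta^2-r^2)+\rho\omega^2=0$, giving $\beta^2=r^2-k_2^2$ and a two-dimensional kernel consisting of vectors orthogonal to $p$; one verifies that $\nu^2=(-\beta_2,0,i\eta_1)$ and $\nu^3=(-\eta_2,\eta_1,0)$ both satisfy $p\cdot\nu=0$ and are linearly independent, hence form a basis. \emph{Pressure branch} ($p\cdot\nu\neq 0$): dotting the symbol equation with $p$ gives $(\lambda+2\mu)(\beta^2-r^2)+\rho\omega^2=0$, i.e.\ $\beta^2=r^2-k_1^2$, and forces $\nu$ to be a scalar multiple of $p$; the choice $\nu^1=ip=(-\eta_1,-\eta_2,i\beta_1)$ matches the listed eigenvector. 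The coincidence $\beta_2=\beta_3$ is just the familiar algebraic multiplicity of the shear wavenumber in isotropic elastodynamics.

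The main obstacle is bookkeeping rather than conceptual, since the decaying direction $x_3$ is treated via the real exponent $\beta$ while $x_1,x_2$ use the oscillatory $i\eta_j$, and the factors of $i$ must be tracked carefully. In particular one must check that the eigenvectors listed in \eqref{eq03} are consistent with this asymmetric choice (hence the $i\beta_1$ in the third slot of $\nu^1$ and the $i\eta_1$ in the third slot of $\nu^2$), and that the pair $\{\nu^2,\nu^3\}$ genuinely spans $p^\perp$ rather than collapsing onto a single direction; both are short direct verifications that close the proof once the shear/pressure decomposition is in place.
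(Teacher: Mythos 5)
Your argument is correct, but it takes a genuinely different and more structural route than the paper. The paper substitutes the ansatz to get the symbol matrix $M(\eta_1,\eta_2,\beta,\omega)$, expands the $3\times 3$ determinant by brute force to obtain $(\beta^2-\eta_1^2-\eta_2^2+k_1^2)(\beta^2-\eta_1^2-\eta_2^2+k_2^2)^2=0$, and then verifies each of $\nu^1,\nu^2,\nu^3$ by an explicit row-by-row computation. You instead rewrite the symbol equation in Lam\'e form, $\bigl[\mu(p\cdot p)+\rho\tilde\omega^2\bigr]\nu+(\lambda+\mu)(p\cdot\nu)p=0$ with $p=(i\eta_1,i\eta_2,\beta)$, and split into the shear branch $p\cdot\nu=0$ (giving $\beta^2=\eta_1^2+\eta_2^2-k_2^2$ with a two-dimensional kernel) and the pressure branch $\nu\parallel p$ (giving $\beta^2=\eta_1^2+\eta_2^2-k_1^2$). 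This explains \emph{why} the shear root is double and where the eigenvectors come from, whereas the paper's computation only confirms that the listed vectors work; your checks $p\cdot\nu^2=p\cdot\nu^3=0$ and $\nu^1=ip$ are one-line substitutes for the paper's page of row expansions. (Note the paper is inconsistent between $\rho\omega^2$ and $\rho\tilde\omega^2$ in the operator $M$; since $k_1,k_2$ are defined through $\tilde\omega$, the factorization requires $\rho\tilde\omega^2$, which is what your branches implicitly use.)

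One small caution: your closing claim that $\{\nu^2,\nu^3\}$ ``genuinely spans $p^\perp$'' fails on the line $\eta_1=0$, where $\nu^2=(-\beta_2,0,0)$ and $\nu^3=(-\eta_2,0,0)$ become parallel. This does not affect the lemma as stated (which only asserts that the listed vectors are eigenvectors, and your branch argument proves exactly that), but the degeneracy is real and is precisely what the paper later quarantines via hypothesis (H), under which $\eta_1=0$ is the only zero of the determinant $\Delta$ of the coefficient matrix $N$. So either restrict the spanning claim to $\eta_1\neq 0$ or drop it from the proof of this lemma.
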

\begin{proof}
By applying operator $M(\partial_1,\partial_2,\partial_3,\omega)$ to $f(x)$ in Eq. (\ref{eq02}) we obtain
$$ M(\partial_1,\partial_2,\partial_3,\omega)f(x)=\int_{\mathbb{R}^2}M(\eta_1,\eta_2,\beta,\omega)\nu(\eta_1,\eta_2,\beta)e^{\beta(x_3+\xi_3)}e^{i[\eta_1(x_1-\xi_1)+\eta_2(x_2-\xi_2)]} d\eta_1d\eta_2, $$ where $$\begin{array}{ll} &M(\eta_1,\eta_2,\beta,\omega)
\\
&=\left(\begin{array}{l}(\lambda+\mu)(-\eta_1^2)+\mu(-\eta_1^2-\eta_2^2+\beta^2)+\rho\omega^2
\quad (\lambda+\mu)(-\eta_1\eta_2)\quad(\lambda+\mu)i\eta_1\beta
\\
(\lambda+\mu)(-\eta_1\eta_2)\quad(\lambda+\mu)(-\eta_2^2)+\mu(-\eta_1^2-\eta_2^2+\beta^2)+\rho\omega^2
\quad(\lambda+\mu)i\eta_2\beta
\\
(\lambda+\mu)i\eta_1\beta\quad(\lambda+\mu)i\eta_2\beta\quad(\lambda+\mu)\beta^2+\mu(-\eta_1^2-\eta_2^2+\beta^2)+\rho\omega^2.
\end{array}\right)
\end{array}.
$$
Matrix equation $M(\partial_1,\partial_2,\partial_3,\omega)f(x)=0$ has a nontrivial solution if and only if
\begin{equation}
\label{eq4}
\hbox{det}M(\eta_1,\eta_2,\beta,\omega)=0.
\end{equation}
By evaluating this determinant from Eq. (\ref{eq4}) we have
$$
\hbox{det}M(\eta_1,\eta_2,\beta,\omega)=(\beta^2-\eta_1^2-\eta_2^2+k_1^2)(\beta^2-\eta_1^2-\eta_2^2+k_2^2)^2=0
$$
and the eigenvalues of $M$ with respect to $\beta$ are
$$
\beta_1=\sqrt{\eta_1^2+\eta_2^2-k_1^2}, \beta_2=\beta_3=\sqrt{\eta_1^2+\eta_2^2-k_2^2}.
$$
In order to recover the eigenvectors, we must solve equations $M(\eta_1,\eta_2,\beta_j,\omega)\nu^j=0$ for $j=1,2,3$.

Now check that Eq. (\ref{eq03}) does give the correct eigenvectors. For vector $\nu^1$ the expansion in rows gives
$$
\begin{array}{ll}
&\left[M(\eta_1,\eta_2,\beta_1,\omega)\nu^1\right]_1
\\
&=\left[(\lambda+\mu)(-\eta_1^2)+\mu(-\eta_1^2-\eta_2^2+\beta_1^2)+\rho\tilde{\omega}^2\right](-\eta_1)+
(\lambda+\mu)(-\eta_1)(-\eta_2)+(\lambda+\mu)i\eta_1\beta_1i\beta_1
\\
&=-\eta_1\left[(\lambda+\mu)(-\eta_1^2)+\mu\left(-\frac{\rho\omega^2}{\lambda+2\mu}+\rho\tilde{\omega}^2\right)+(\lambda+2\mu)(-\eta_2^2+\beta_1^2)\right]
\\
&=-\eta_1\left[-\frac{(\lambda+\mu)\rho\tilde{\omega}^2}{\lambda+2\mu}-\frac{\mu\rho\tilde{\omega}^2}{\lambda+2\mu}+\rho\tilde{\omega}^2\right]=0
\\
&\left[M(\eta_1,\eta_2,\beta_1,\omega)\nu^1\right]_2
\\
&=(\lambda+\mu)(-\eta_1\eta_2)(-\eta_1)+\left[(\lambda+\mu)(-\eta_2^2)+\mu(-\eta_1^2-\eta_2^2+\beta_1^2)+\rho\tilde{\omega}^2\right](-\eta_2)+
(\lambda+\mu)i\eta_2\beta_1i\beta_1
\\
&=-\eta_2\left[(\lambda+\mu)(-\eta_1^2)+(\lambda+\mu)(-\eta_2^2)+\mu(-\eta_1^2-\eta_2^2+\beta_1^2)+\rho\tilde{\omega}^2+(\lambda+\mu)\beta_1^2\right]
\\
&=-\eta_2\left[(\lambda+\mu)(-\eta_1^2-\eta_2^2)+(\lambda+\mu)\beta_1^2+\rho\tilde{\omega}^2\right]=0
\\
&\left[M(\eta_1,\eta_2,\beta_1,\omega)\nu^1\right]_3
\\
&=(\lambda+\mu)(i\eta_1\beta_1)(-i\eta_1)+(\lambda+\mu)(i\eta_2\beta_1)(-i\eta_2)+\left[(\lambda+\mu)\beta_1^2+\mu(-\eta_1^2-\eta_2^2)
+\rho\tilde{\omega}^2\right]i\beta_1
\\
&=i\beta_1\left[(\lambda+\mu)(-\eta_1^2)+(\lambda+\mu)(-\eta_2^2)+(\lambda+\mu)\beta_1^2)+\mu(-\eta_1^2-\eta_2^2+\beta_1^2)+\rho\tilde{\omega}^2\right]
\\
&=i\beta_1\left[(\lambda+2\mu)(-\eta_1^2-\eta_2^2+\beta_1^2)+\rho\omega^2\right]=0
\end{array}
$$
Similarly for vector $\nu^2$ we have
$$
\begin{array}{ll}
&\left[M(\eta_1,\eta_2,\beta_2,\omega)\nu^2\right]_1
\\
&=\left[(\lambda+\mu)(-\eta_1^2)+\mu(-\eta_1^2-\eta_2^2+\beta_2^2)+\rho\omega^2\right](-\beta_2)+
(\lambda+\mu)i\eta_1i\eta_1
\\
&=(\lambda+\mu)(-\eta_1^2)(-\beta_2)+(\lambda+\mu)(-\eta_1^2)\beta_2=0
\\
&\left[M(\eta_1,\eta_2,\beta_2,\omega)\nu^2\right]_2
\\
&=(\lambda+\mu)(-\eta_1\eta_2)(-\beta_2)+(\lambda+\mu)i\eta_2i\eta_1=0
\\
&\left[M(\eta_1,\eta_2,\beta_2,\omega)\nu^2\right]_3
\\
&=(\lambda+\mu)(i\eta_1\beta_2)(-\beta_2)+\left[(\lambda+\mu)\beta_2^2+\mu(-\eta_1^2-\eta_2^2+\beta_2^2)
+\rho\tilde{\omega}^2\right]i\eta_1
\\
&=(\lambda+\mu)(-\beta^2_2)i\eta_1+(\lambda+\mu)\beta_2^2i\eta_1=0
\end{array}
$$
Finally, for vector $\nu^3$ we have
$$
\begin{array}{ll}
&\left[M(\eta_1,\eta_2,\beta_3,\omega)\nu^3\right]_1
\\
&=\left[(\lambda+\mu)(-\eta_1^2)+\mu(-\eta_1^2-\eta_2^2+\beta_3^2)+\rho\tilde{\omega}^2\right](-\eta_2)+
(\lambda+\mu)(-\eta_1^2\eta_2)
\\
&=(\lambda+\mu)(\eta_1^2\eta_2)+(\lambda+\mu)(-\eta_1^2\eta_2)=0
\\
&\left[M(\eta_1,\eta_2,\beta_3,\omega)\nu^3\right]_2
\\
&=(\lambda+\mu)(-\eta_1\eta_2)(-\eta_2)+\left[(\lambda+\mu)(-\eta_2^2+\mu(-\eta_1^2-\eta_2^2+\beta_3^2)+\rho\omega^2\right]\eta_1
\\
&=(\lambda+\mu)\eta_2^2\eta_1+(\lambda+\mu)(-\eta_2^2)\eta_1=0
\\
&\left[M(\eta_1,\eta_2,\beta_3,\omega)\nu^3\right]_3
\\
&=(\lambda+\mu)(i\eta_1\beta_3)(-\eta_2)+(\lambda+\mu)i\beta_3\eta_2\eta_1
\\
&=(\lambda+\mu)(-i\beta_3\eta_2)\eta_1+(\lambda+\mu)i\beta_3\eta_2\eta_1=0
\end{array}
$$

\end{proof}
Using Lemma \ref{lem2} allows us to form the general solution of Eq. (\ref{eq1a}) with a zero right-hand side as a $3\times3$ matrix valued function $w(x,\xi,\omega)$ comprising vectors $w=(w_{j1},w_{j2},w_{j3})$. Here
\begin{equation}
\label{eq5}
w_{jk}=\int_{\mathbb{R}^2}C^m_k\nu^m_je^{\beta_m(x_3+\xi_3)}e^{i[\eta_1(x_1-\xi_1)+\eta_2(x_2-\xi_2)]}d\eta_1d\eta_2, \quad  j=1,2,3,
\end{equation}
where the nine component $C^m_k$ depend on parameters $\eta_1,\eta_2,\beta_m,\omega$, but not on coordinates $x_1,x_2$. We can find the $C^m_k$ from the boundary conditions
\begin{equation}
\label{eq6}
T^w_{pk}=-T^{u^\ast}_{pk} \quad  \hbox{on} \quad x_3=0.
\end{equation}

By using the definition of the $3\times3$ traction matrix along $x_3=0$ we have
\begin{equation}
\label{eq7}
T^w_{pk}|_{x_3=0}=C_{p3jm}w_{jk,m}|_{x_3=0},
\end{equation}
Finally, the following expressions are obtained for the tractions after applying some simplifications in the calculation process
$$
\begin{array}{ll}
&T^w_{1k}|_{x_3=0}
=\int_{\mathbb{R}^2}\mu\left[C^1_k(-2\eta_1\beta_1)\frac{e^{\beta_1\xi_3}}{\beta_1}+C^2_k(-\beta_2^2-\eta_1^2)\frac{e^{\beta_2\xi_3}}{\beta_2}+C^3_k (-\eta_2\beta_3)\frac{e^{\beta_3\xi_3}}{\beta_3}\right]
\\
&\times e^{i[\eta_1(x_1-\xi_1)+\eta_2(x_2-\xi_2)]}d\eta_1d\eta_2;
\\
&T^w_{2k}|_{x_3=0}
=\int_{\mathbb{R}^2}\mu\left[C^1_k(-\eta_2\beta_1)\frac{e^{\beta_1\xi_3}}{\beta_1}+C^2_k(-\eta_1\eta_2)\frac{e^{\beta_2\xi_3}}{\beta_2}+C^3_k (\eta_1\beta_3)\frac{e^{\beta_3\xi_3}}{\beta_3}\right]
\\
&\times e^{i[\eta_1(x_1-\xi_1)+\eta_2(x_2-\xi_2)]}d\eta_1d\eta_2;
\\
&T^w_{3k}|_{x_3=0}
=\int_{\mathbb{R}^2}C^1_k\left[\lambda i(-\eta_1^2-\eta_2^2)+(\lambda+2\mu)i\beta_1^2\right]\frac{e^{\beta_1\xi_3}}{\beta_1}
\\
&+C^2_k\left[\lambda(-i\eta_1\beta_2)+(\lambda+2\mu)i\eta_1\beta_2 \frac{e^{\beta_2\xi_3}}{\beta_2}\right]
 e^{i[\eta_1(x_1-\xi_1)+\eta_2(x_2-\xi_2)]}d\eta_1d\eta_2.
\end{array}
$$

Components of a fundamental solution $u^\ast(x,\xi,\omega)$ are
$$
 u^\ast_{pq}=\frac{1}{4\pi\mu}\int_{\mathbb{R}^2}D_{pq}e^{i[\eta_1(x_1-\xi_1)+\eta_2(x_2-\xi_2)]}d\eta_1d\eta_2,
 $$
 where
 $$
 D_{pq}=D^2_{pq}\frac{1}{k_2^2}\frac{e^{-\beta_2|x_3-\xi_3|}}{\beta_2}-D^1_{pq}\frac{1}{k_2^2}\frac{e^{-\beta_1|x_3-\xi_3|}}{\beta_1},
 $$
 and $D^1_{pq}$, $D^2_{pq}$ are the components of the following $3\times3$ matrices:
 $$
  D^1=\left(\begin{array}{lll} -\eta_1^2 & -\eta_1\eta_2 & i\eta_1\beta_1\hbox{sgn}(x_3-\xi_3)
\\
-\eta_1\eta_2 & -\eta_2^2 & i\eta_2\beta_1\hbox{sgn}(x_3-\xi_3)
\\
i\eta_1\beta_1\hbox{sgn}(x_3-\xi_3) & i\eta_2\beta_1\hbox{sgn}(x_3-\xi_3) & \beta_1^2-2\beta_1\delta(x_3-\xi_3)
\end{array}\right)
$$
$$
D^2=\left(\begin{array}{lll} -\eta_1^2+k^2_2 & -\eta_1\eta_2 & i\eta_1\beta_2\hbox{sgn}(x_3-\xi_3)
\\
-\eta_1\eta_2 & -\eta_2^2+k^2_2  & i\eta_2\beta_2\hbox{sgn}(x_3-\xi_3)
\\
i\eta_1\beta_2\hbox{sgn}(x_3-\xi_3) & i\eta_2\beta_2\hbox{sgn}(x_3-\xi_3) & \beta_2^2+k^2_2 -2\beta_2\delta(x_3-\xi_3)
\end{array}\right)
$$
In the above using the properties of $\delta$ function: $\delta(x_3-\xi_3)e^{-\beta_j|x_3-\xi_3|}=\delta(x_3-\xi_3)$ for $D_{33}$ we obtain
$$
D_{33}=\frac{1}{k_2^2}\left[D_{33}^2\frac{e^{-\beta_2|x_3-\xi_3|}}{\beta_2}-D_{33}^1\frac{e^{-\beta_1|x_3-\xi_3|}}{\beta_1}\right]
=\frac{1}{k_2^2}\left[(\beta^2_2+k_2^2)\frac{e^{-\beta_2|x_3-\xi_3|}}{\beta_2}-\beta^2_1\frac{e^{-\beta_1|x_3-\xi_3|}}{\beta_1}\right],
$$
and terms with $\delta$ function are simplified.

The resulting tractions generate by this fundamental solution along the boundary $x_3=0$ are
$$
 T^{u^\ast}_{jk}|_{x_3=0}=C_{j3mn}u^\ast_{mk,n}|_{x_3=0}.
 $$
Their  form   in coordinate notations is as follows:
\begin{equation}
\label{eq9}
\begin{array}{ll}
&T^{u^\ast}_{1k}|_{x_3=0}
=\int_{\mathbb{R}^2}\frac{1}{4\pi k_2^2}\left[\left(D^1_{1k}\beta_1+D^1_{3k}i\eta_1\right)\frac{e^{\beta_1\xi_3}}{\beta_1}-
\left(D^2_{1k}\beta_2+D^2_{3k}i\eta_1\right)\frac{e^{\beta_2\xi_3}}{\beta_2}\right]
\\
&\times e^{i[\eta_1(x_1-\xi_1)+\eta_2(x_2-\xi_2)]}d\eta_1d\eta_2=\int_{\mathbb{R}^2}Q_{1k}e^{i[\eta_1(x_1-\xi_1)+\eta_2(x_2-\xi_2)]}d\eta_1d\eta_2,
\\
&T^{u^\ast}_{2k}|_{x_3=0}
=\int_{\mathbb{R}^2}\frac{1}{4\pi k_2^2}\left[\left(D^1_{2k}\beta_1+D^1_{3k}i\eta_2\right)\frac{e^{\beta_1\xi_3}}{\beta_1}-
\left(D^2_{2k}\beta_2+D^2_{3k}i\eta_2\right)\frac{e^{\beta_2\xi_3}}{\beta_2}\right]
\\
&\times e^{i[\eta_1(x_1-\xi_1)+\eta_2(x_2-\xi_2)]}d\eta_1d\eta_2=\int_{\mathbb{R}^2}Q_{2k}e^{i[\eta_1(x_1-\xi_1)+\eta_2(x_2-\xi_2)]}d\eta_1d\eta_2,
\\
&T^{u^\ast}_{3k}|_{x_3=0}
=\int_{\mathbb{R}^2}\frac{1}{4\pi k_2^2}\left\{(\lambda+2\mu)D^1_{3k}\beta_1+\lambda\left(D^1_{1k}i\eta_1+D^1_{2k}i\eta_2\right)\frac{e^{\beta_1\xi_3}}{\beta_1}\right.
\\
&\left.-\left[(\lambda+2\mu)D^2_{3k}\beta_2+\lambda\left(D^2_{1k}i\eta_1+D^2_{2k}i\eta_2\right)\frac{e^{\beta_2\xi_3}}{\beta_2}\right]\right\}.
\\
&\times e^{i[\eta_1(x_1-\xi_1)+\eta_2(x_2-\xi_2)]}d\eta_1d\eta_2=\int_{\mathbb{R}^2}Q_{3k}e^{i[\eta_1(x_1-\xi_1)+\eta_2(x_2-\xi_2)]}d\eta_1d\eta_2.
\end{array}
\end{equation}
Note that we have used the properties of the signum function $\hbox{sgn}(x_3-\xi_3)|_{x_3=0}=\hbox{sgn}(-\xi_3)=1$, since $\xi_3<0$. Also, we used the properties of the $\delta$ function to obtain that $\delta(x_3-\xi_3)|_{x_3=0}=0$ since $\xi_3<0$.

The expressions for $Q_k$ using $D^1, D^2$ and Eq. (\ref{eq9}) are:
\begin{itemize}
\item[(1k):] $Q_{1k}=\frac{\mu}{4\pi k^2_2}\left[\left(D_{1k}^1\beta_1+D_{3k}^1i\eta_1\right)\frac{e^{\beta_1\xi_3}}{\beta_1}-\left(D_{1k}^2\beta_2+D_{3k}^2i\eta_1\right)\frac{e^{\beta_2\xi_3}}{\beta_2}\right]$
\begin{itemize}
\item[(11):] $Q_{11}=-\frac{\mu}{4\pi k^2_2}\left[\left(2\eta_1^2\beta_1\right)\frac{e^{\beta_1\xi_3}}{\beta_1}+\left(-2\eta_1^2\beta_2+k_2^2\beta_2\right)\frac{e^{\beta_2\xi_3}}{\beta_2}\right]$
\item[(12):] $Q_{12}=-\frac{\mu}{4\pi k^2_2}\left[\left(2\eta_1\eta_2\beta_1\right)\frac{e^{\beta_1\xi_3}}{\beta_1}+\left(-2\eta_1\eta_2\beta_2+k_2^2\beta_2\right)\frac{e^{\beta_2\xi_3}}{\beta_2}\right]$
\item[(13):] $Q_{13}=-\frac{\mu}{4\pi k^2_2}\left[\left(2i\eta_1\beta_1^2\right)\frac{e^{\beta_1\xi_3}}{\beta_1}-\left(i\eta_1(2\beta_2^2+k^2_2)\right)\frac{e^{\beta_2\xi_3}}{\beta_2}\right]$
\end{itemize}
\item[(2k):]$Q_{2k}=\frac{\mu}{4\pi k^2_2}\left[\left(D_{2k}^1\beta_1+D_{3k}^1i\eta_2\right)\frac{e^{\beta_1\xi_3}}{\beta_1}-\left(D_{2k}^2\beta_2+D_{3k}^2i\eta_2\right)\frac{e^{\beta_2\xi_3}}{\beta_2}\right]$
\begin{itemize}
\item[(21):]$Q_{21}=-\frac{\mu}{4\pi k^2_2}\left[\left(2\eta_1\eta_2\beta_1\right)\frac{e^{\beta_1\xi_3}}{\beta_1}+\left(-2\eta_1\eta_2\beta_2+k_2^2\beta_2\right)\frac{e^{\beta_2\xi_3}}{\beta_2}\right]$
\item[(22):]$Q_{22}=-\frac{\mu}{4\pi k^2_2}\left[\left(\eta_2^2\beta_1\right)\frac{e^{\beta_1\xi_3}}{\beta_1}+\left(-2\eta_2^2\beta_2+k_2^2\beta_2\right)\frac{e^{\beta_2\xi_3}}{\beta_2}\right]$
\item[(23):]$Q_{23}=\frac{\mu}{4\pi k^2_2}\left[\left(2i\eta_2\beta^2_1\right)\frac{e^{\beta_1\xi_3}}{\beta_1}-\left(2i\eta_2\beta_2^2+i\eta_2k_2^2)\right)\frac{e^{\beta_2\xi_3}}{\beta_2}\right]$
\end{itemize}
\item[(3k):]$\begin{array}{l}Q_{3k}=\frac{1}{4\pi k^2_2}\left\{\left[(\lambda+2\mu)D^1_{3k}\beta_1+\lambda\left(D_{1k}^1i\eta_1+D_{2k}^1i\eta_2\right)\right]\frac{e^{\beta_1\xi_3}}{\beta_1}\right.
    \\
    \left.-
    \left[(\lambda+2\mu)D^2_{3k}\beta_2+\lambda\left(D_{1k}^2i\eta_1+D_{2k}^2i\eta_2\right)\right]\frac{e^{\beta_2\xi_3}}{\beta_2}\right\}\end{array}$
\begin{itemize}
\item[(31):]$\begin{array}{l}Q_{31}=\frac{1}{4\pi k^2_2}\left\{\left[(\lambda+2\mu)i\eta_1\beta^2_1+\lambda\left(-i\eta_1^3-i\eta_1\eta^2_2\right)\right]\frac{e^{\beta_1\xi_3}}{\beta_1}\right.
    \\
    \left.-
    \left[(\lambda+2\mu)i\eta_1\beta^2_2+\lambda\left((-\eta^2_1+k_2^2)i\eta_1-i\eta_1\eta^2_2\right)\right]\frac{e^{\beta_2\xi_3}}{\beta_2}\right\}\end{array}$
\item[(32):]$\begin{array}{l}Q_{32}=\frac{1}{4\pi k^2_2}\left\{\left[(\lambda+2\mu)i\eta_2\beta^2_1+\lambda\left(-i\eta_1^2\eta_2-i\eta^3_2\right)\right]\frac{e^{\beta_1\xi_3}}{\beta_1}\right.
    \\
    \left.-
    \left[(\lambda+2\mu)i\eta_2\beta^2_2+\lambda\left((-i\eta^2_1\eta_2+i\eta_2(-\eta_1^2+k_2^2)\right)\right]\frac{e^{\beta_2\xi_3}}{\beta_2}\right\}\end{array}$
\item[(33):]$\begin{array}{l}Q_{33}=\frac{1}{4\pi k^2_2}\left\{\left[(\lambda+2\mu)\beta^3_1+\lambda\left(-\beta_1(\eta_1^2+\eta_2^2)\right)\right]\frac{e^{\beta_1\xi_3}}{\beta_1}\right.
    \\
    \left.-
    \left[(\lambda+2\mu)\beta_2(\beta^2_2+k_2^2)+\lambda\left(-\beta_2(\eta_1^2+\eta_2^2)\right)\right]\frac{e^{\beta_2\xi_3}}{\beta_2}\right\}\end{array}$
\end{itemize}
\end{itemize}

The matrix of the coefficients $C^m_k$ is $N=\left\{N_{ij}\right\}$, $i,j=1,2,3$:
$$
 N=\left(\begin{array}{lll}
\mu(-2\eta_1\beta_1)\frac{e^{\beta_1\xi_3}}{\beta_1} &\mu(-\beta_2^2-\eta_1^2)\frac{e^{\beta_2\xi_3}}{\beta_2}&
\mu(-\eta_2\beta_3)\frac{e^{\beta_3\xi_3}}{\beta_3}
\\
\mu(-2\eta_2\beta_1)\frac{e^{\beta_1\xi_3}}{\beta_1} & \mu(-\eta_1\eta_2)\frac{e^{\beta_2\xi_3}}{\beta_2}&
\mu(\eta_1\beta_3) \frac{e^{\beta_3\xi_3}}{\beta_3}
\\
\begin{array}{l}\left[\lambda(-i\eta^2_1-i\eta_2^2)\right.
\\ \left.+i(\lambda+2\mu)\beta_1^2\right]\frac{e^{\beta_1\xi_3}}{\beta_1}\end{array}
& \begin{array}{l}\left[\lambda(-i\eta_1\beta_2)\right.
\\ \left.+(\lambda+2\mu)i\eta_1\beta_2\right]\frac{e^{\beta_2\xi_3}}{\beta_2}\end{array} & 0
\end{array}\right)
$$
If we denote the determinant of the above matrix as $\Delta=\det N$, we get upon simplification that
\begin{equation}
\label{eq90}
\begin{array}{l}
\Delta=\frac{e^{(\beta_1+2\beta_2)\xi_3}}{\beta_1\beta_2^2}\left\{i\mu^2\eta_1\beta_2\{4\mu (\eta_1 ^2+\eta_2^2)\beta_1\beta_2\right.
\\
+\left.[\lambda(\eta_1^2+\eta_2^2)-(\lambda+2\mu)\beta_1^2][2(\eta_1^2+\eta_2^2)-k_2^2]\}\right\}.
\end{array}
\end{equation}
We now state the following hypothesis (H):

If $p\neq q$ and $\alpha\neq0$, then the only root of $\Delta$ is $\eta_1=0$.
\begin{theorem}
\label{th1}
Let hypothesis (H) hold true. Then, there exists a $3\times3$ matrix-valued function $w$ such that $g(x,\xi,\omega)=u^\ast(x,\xi,\omega)+w(x,\xi,\omega)$ is a solution of Eq. (\ref{eq1a}) and satisfies boundary conditions Eq.  (\ref{eq6}), i.e., Eq. (\ref{eq1b}) holds.
\end{theorem}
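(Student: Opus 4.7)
The plan is to construct $w$ as the Fourier integral (\ref{eq5}) and to use the nine scalar coefficients $\{C^m_k\}_{m,k=1,2,3}$ to cancel the traction generated by $u^\ast$ on $x_3=0$. By Lemma \ref{lem2}, each column of the ansatz is a superposition of plane-wave eigenvectors of the operator $L$, so $L(w)=0$ holds identically in the half-space regardless of how the $C^m_k$ are chosen. Hence condition (a) of Section \ref{sec4} is automatic, and the entire problem reduces to fitting the traction boundary condition (\ref{eq6}).

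Since both $T^w_{pk}|_{x_3=0}$ and $T^{u^\ast}_{pk}|_{x_3=0}$ have already been written as inverse Fourier transforms in $(\eta_1,\eta_2)$, injectivity of the Fourier transform lets me match integrands pointwise, reducing (\ref{eq6}), for each fixed $k=1,2,3$, to a $3\times 3$ linear system
$$ N(\eta_1,\eta_2,\omega)\begin{pmatrix}C^1_k\\ C^2_k\\ C^3_k\end{pmatrix}=-\begin{pmatrix}Q_{1k}\\ Q_{2k}\\ Q_{3k}\end{pmatrix}, $$
where $N$ is the matrix listed just before (\ref{eq90}) and the $Q_{pk}$ were computed above. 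By hypothesis (H) the determinant $\Delta=\det N$ of (\ref{eq90}) vanishes only on the line $\eta_1=0$, a set of two-dimensional Lebesgue measure zero in $\mathbb{R}^2$. Consequently Cramer's rule furnishes, for almost every $(\eta_1,\eta_2)$,
$$ C^m_k(\eta_1,\eta_2,\omega)=-\frac{\det N^m_k(\eta_1,\eta_2,\omega)}{\Delta(\eta_1,\eta_2,\omega)}, $$
where $N^m_k$ denotes $N$ with its $m$-th column replaced by $(Q_{1k},Q_{2k},Q_{3k})^\top$. Inserting these expressions into (\ref{eq5}) yields the desired $w$: the property $L(w)=0$ persists from the ansatz, and $T^w_{pk}+T^{u^\ast}_{pk}=0$ on $x_3=0$ holds by construction, so (\ref{eq1b}) for $g=u^\ast+w$ follows.

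The principal obstacle is checking that the integral (\ref{eq5}) with these $C^m_k$ actually converges and defines a smooth function on $\mathbb{R}^3_-$. At large $|\eta|$ the integrand is dominated by $e^{\beta_m(x_3+\xi_3)}$ with $\beta_m\sim\sqrt{\eta_1^2+\eta_2^2}$ and $x_3+\xi_3<0$, producing exponential decay that beats the polynomial growth of the $Q_{pk}$ and of the cofactors of $N$. The delicate issue is the behavior near $\{\eta_1=0\}$, where $\Delta$ vanishes to first order in $\eta_1$. One must show that for every $m,k$ the numerator $\det N^m_k$ vanishes to at least first order in $\eta_1$ as well, so that the apparent singularity of $C^m_k$ is removable; I expect this to follow from a careful cofactor expansion that exploits the $\eta_1$-factors already visible in the second and third columns and in the third row of $N$, combined with the explicit structure of the vectors $(Q_{1k},Q_{2k},Q_{3k})$ listed above. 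Once this cancellation is verified, $w$ is well defined and smooth on $\mathbb{R}^3_-$, and the theorem is proved.
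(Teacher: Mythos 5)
Your proposal follows essentially the same route as the paper: reduce the traction condition (\ref{eq6}), after matching Fourier integrands, to the three linear systems $NC_k=-Q_k$, $k=1,2,3$, and solve them by Cramer's rule, with hypothesis (H) guaranteeing that $\Delta=\det N$ vanishes only on the negligible set $\{\eta_1=0\}$. The only difference is that you explicitly flag the two remaining analytic points --- convergence of the integral (\ref{eq5}) and removability of the apparent singularity of $C^m_k$ at $\eta_1=0$ --- which the paper passes over with the bare assertion that the zeros of $\Delta$ ``do not affect the solvability,'' so your write-up identifies, without fully closing, the same gap the paper leaves open.
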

\begin{proof}
We can recover $w(x,\xi,\omega)$ by solving three linear systems of equations for vectors $C_k=(C^1_k,C^2_k,C^3_k)$ with a right-hand-side comprising vectors $Q_k=(Q_{1k},Q_{2k},Q_{3k})$
\begin{equation}
\label{eq10}
NC_k=-Q_k, \quad k=1,2,3.
\end{equation}

Note that if (H) holds true, the zeros of $\Delta$ (which is in the denominator of all solutions of the system of Eq. (\ref{eq10}) and under the integral sign in $\mathbb{R}^2$) do not affect on the solvability of Eq. (\ref{eq10}).

The solution of Eq. (\ref{eq10}) for $C^m_k$ is expressed using Kramer's rule as
$$
C^m_k=\frac{\Delta_{mk}}{\Delta},
$$
where $\Delta$ is given by Eq. (\ref{eq90}) and $\Delta_{mk}$ derives from $\Delta$ replacing column $(m)$ with column $(Q_{m1},Q_{m2},Q_{m3})^t$, where superscript $t$ denote transposition.

Finally, the expressions for $\Delta_{mk}$ are as follows:
$$
\begin{array}{ll}
 &\Delta_{11}=\left|\begin{array}{lll} -Q_{11}&N_{12}&N_{13}
\\-Q_{12}&N_{22}&N_{23}
\\-Q_{13}&N_{32}&N_{33}\end{array}\right|, \quad \Delta_{12}=\left|\begin{array}{lll} N_{11}&-Q_{11}&N_{13}
\\N_{21}&-Q_{12}&N_{23}
\\N_{31}&-Q_{13}&N_{33}\end{array}\right|, \quad \Delta_{13}=\left|\begin{array}{lll} N_{11}&N_{12}&-Q_{11}
\\N_{21}&N_{22}&-Q_{12}
\\N_{31}&N_{32}&-Q_{13}\end{array}\right|,
\\[2pt]
\\
&\Delta_{21}=\left|\begin{array}{lll} -Q_{21}&N_{12}&N_{13}
\\-Q_{22}&N_{22}&N_{23}
\\-Q_{23}&N_{32}&N_{33}\end{array}\right|, \quad \Delta_{22}=\left|\begin{array}{lll} N_{11}&-Q_{21}&N_{13}
\\N_{21}&-Q_{22}&N_{23}
\\N_{31}&-Q_{23}&N_{33}\end{array}\right|, \quad \Delta_{23}=\left|\begin{array}{lll} N_{11}&N_{12}&-Q_{21}
\\N_{21}&N_{22}&-Q_{22}
\\N_{31}&N_{32}&-Q_{23}\end{array}\right|,
\\[2pt]
\\
&\Delta_{31}=\left|\begin{array}{lll} -Q_{31}&N_{12}&N_{13}
\\-Q_{32}&N_{22}&N_{23}
\\-Q_{33}&N_{32}&N_{33}\end{array}\right|, \quad \Delta_{32}=\left|\begin{array}{lll} N_{11}&-Q_{31}&N_{13}
\\N_{21}&-Q_{32}&N_{23}
\\N_{31}&-Q_{33}&N_{33}\end{array}\right|, \quad \Delta_{33}=\left|\begin{array}{lll} N_{11}&N_{12}&-Q_{31}
\\N_{21}&N_{22}&-Q_{32}
\\N_{31}&N_{32}&-Q_{33}\end{array}\right|.
\end{array}
$$
The expressions of $\Delta_{mk}$ after simplifications are:
\begin{itemize}
\item[]
$
\begin{array}{l}
\Delta_{11}=\frac{i\mu^3e^{2\beta_2\xi_3}\eta_1}{4\pi k_2^2\beta_2^2}\left\{-2e^{\beta_1\xi_3}\beta_2\eta_1\left[2\beta_2(\eta_1^2+\eta_2^2)+\beta_1(\beta_2^2+\eta_1^2+\eta_2^2)\right]\right.
\\[1pt]
\\
\left.+e^{\beta_2\xi_3}\left[2\beta_2^2\eta_1(\beta_2^2+3(\eta_1^2+\eta_2^2))+k_2^2(-\beta_2^2(\eta_1+2\eta_2)+\eta_1(\eta_1^2+\eta_2^2))\right]\right\}.
\end{array}
$
\item[]
$
\begin{array}{l}
\Delta_{12}=\frac{1}{4\pi k_2^2\beta_1\beta_2}\left\{ie^{(\beta_1+\beta_2)\xi_3}\mu^2\left[-2e^{\beta_1\xi_3}\beta_2\eta_1(\eta_1^2+\eta_2^2)((\eta_1^2+\eta_2^2)\lambda-\beta_1^2(\lambda+4\mu))\right.\right.
\\[1pt]
\\
\left.\left.-e^{\beta_2\xi_3}\left[2\beta_2\eta_1(\eta_1^2+\eta_2^2)((\eta_1^2+\eta_2^2)\lambda+2\beta_1\beta_2\mu+\beta1^2(\lambda+2\mu))\right.\right.\right.
\\[1pt]
\\
\left.\left.+k_2^2(\beta_2(\eta_1^3+\eta_1^2\eta_2+\eta_1\eta_2^2+\eta_2^3)\lambda+2\beta_1\eta_1(\eta_1^2+\eta_2^2)\mu-\beta_1^2\beta_2(\eta_1+\eta_2)(\lambda+2\mu))]\right]\right\}.
\end{array}
$
\item[]
$
\begin{array}{l}
\Delta_{13}=-\frac{1}{4\pi k_2^2\beta_1\beta_2}\left\{ie^{(\beta_1+\beta_2)\xi_3}\mu^2\left[-2e^{\beta_1\xi_3}\beta^2_2\eta_1\eta_2((\eta_1^2+\eta_2^2)\lambda-\beta_1^2(\lambda+4\mu))\right]\right.
\\[1pt]
\\
\left.-e^{\beta_2\xi_3}\left[2\beta^2_2\eta_1\eta_2(-(\eta_1^2+\eta_2^2)\lambda+2\beta_1\beta_2\mu+\beta_1^2(\lambda+2\mu))+k_2^2((\beta^2_2+\eta_1(\eta_1-\eta_2))\right.\right.
\\[1pt]
\\
\left.\left.+(\eta_1^2+\eta_2^2)\lambda+2\beta_1\beta_2\eta_1(2\eta_1-\eta_2)\mu
-\beta_1^2(\beta_2^2+\eta_1(\eta_1-\eta_2))(\lambda+2\mu))\right]\right\}.
\end{array}
$
\item[]
$
\begin{array}{l}
\Delta_{21}=-\frac{i\mu^3e^{2\beta_2\xi_3}\eta_1}{4\pi k_2^2\beta_2^2}\left\{-2e^{\beta_1\xi_3}\beta_2\eta_2\left[\beta_1(\beta_2^2+\eta_1^2+\eta_2^2)-\beta_2(2\eta_1^2+\eta_2^2)\right]\right.
\\[1pt]
\\
\left.+e^{\beta_2\xi_3}\left[2\beta_2^2\eta_2(\beta_2^2-\eta_1^2-\eta_2^2)+k_2^2(2\beta_2^2\eta_1+3\beta_2^2\eta_2+\eta_1^2\eta_2+\eta_2^3)\right]\right\}.
\end{array}
$
\item[]
$
\begin{array}{l}
\Delta_{22}=\frac{1}{4\pi k_2^2\beta_1\beta_2}\left\{ie^{(\beta_1+\beta_2)\xi_3}\mu^2\left[-e^{\beta_1\xi_3}\beta_2\eta_2((2\eta_1^4+3\eta_1^2\eta_2^2)\lambda
-\beta_1^2(2\eta_1^2\lambda+\eta_2^2(\lambda-2\mu))\right.\right.
\\[1pt]
\\
\left.\left.+e^{\beta_2\xi_3}\left[2\beta_2\eta_2(\eta_1^2+\eta_2^2)((\eta_1^2+\eta_2^2)\lambda+2\beta_1\beta_2\mu-\beta^2_(\lambda+2\mu))\right.\right.\right.
\\[1pt]
\\
\left.\left.+k_2^2(-\beta_2(\eta_1^3+\eta_1^2\eta_2+\eta_1\eta_2^2+\eta_2^3)\lambda+2\beta_1\eta_2(\eta_1^2+\eta_2^2)\mu+\beta_1^2\beta_2(\eta_1+\eta_2)(\lambda+2\mu))]\right]\right\}.
\end{array}
$
\item[]
$
\begin{array}{l}
\Delta_{23}=-\frac{1}{4\pi k_2^2\beta_1\beta_2}\left\{ie^{(\beta_1+\beta_2)\xi_3}\mu^2\left[-e^{\beta_1\xi_3}\eta_2^2(-(\beta_2^2-\eta_1^2)(\eta_1^2+\eta_2^2)\lambda+4\beta_1\beta_2\eta_1^2\mu\right.\right.
\\[1pt]
\\
\left.\left.+\beta_1^2(\beta_2^2(\lambda-2\mu)-\eta_1^2(\lambda+2\mu))\right.\right.
\\[1pt]
\\
\left.\left.+e^{\beta_2\xi_3}\left[2\beta_2^2\eta_2^2((\eta_1^2+\eta_2^2)\lambda+2\beta_1\beta_2\mu-\beta_1^2(\lambda+2\mu))+k_2^2[-(\beta_2^2+\eta_1(\eta_1-\eta_2))
(\eta_1^2+\eta_2^2)\lambda\right.\right.\right.
\\[1pt]
\\
\left.\left.+2\beta_1\beta_2\mu(-2\eta_1^2+2\eta_1\eta_2+\eta_2^2)\mu+\beta_1^2(\beta_2^2+\eta_1(\eta_1-\eta_2))(\lambda+2\mu))]\right]\right\}.
\end{array}
$
\item[]
$
\begin{array}{l}
\Delta_{31}=-\frac{1}{4\pi k_2^2\beta_1\beta_2}\left\{e^{2\beta_2\xi_3}\eta_1\mu^2\left[-e^{\beta_1\xi_3}(-2\beta_2(\eta_1^2+\eta^2_1)\right.\right.
\\[1pt]
\\
\left.\left.+\beta_1(\beta_2^2+\eta_1^2+\eta_2^2))(-(\eta_1^2+\eta_2^2)\lambda+\beta_1^2(\lambda+2\mu))\right]\right.
\\[1pt]
\\
\left.+e^{\beta_2\xi_3}\beta_1\left[(\eta_1^4+4\eta_1^2\eta_2^2-\eta_2^4)\lambda-2\beta_2^2(\eta_1^2+\eta_2^2)(\lambda+2\mu)\right.\right.
\\[1pt]
\\
\left.\left.+\beta_2^4(\lambda+2\mu)+k_2^2[-(\eta_1^2+\eta_2^2)(\lambda-2\mu)+\beta_2^2(\lambda+2\mu)]\right]\right\}
\end{array}
$
\item[]
$
\begin{array}{l}
\Delta_{32}=\frac{1}{4\pi k_2^2}\left\{e^{(\beta_1+\beta_2)\xi_3}\mu\left[2(\eta_1^2+\eta^2_1)\mu\right.\right.
\\[1pt]
\\
\left.\left.e^{\beta_1\xi_3}((\eta_1^2+\eta_2^2)\lambda-\beta_1^2(\lambda+2\mu))+e^{\beta_2\xi_3}(-(\eta_1^2+\eta^2_1)\lambda+(k_2^2+\beta_2^2)(\lambda+2\mu))\right]\right.
\\[1pt]
\\
\left.\frac{1}{\beta_1^2\beta_2}[-(\eta_1^2+\eta_2^2)\lambda+\beta_1^2(\lambda+2\mu)]
\left[e^{\beta_1\xi_3}\beta_2(\eta_1^2+\eta_2^2)((\eta_1^2+\eta_2^2)\lambda-\beta_1^2(\lambda+2\mu))\right]\right.
\\[1pt]
\\
\left.+e^{\beta_2\xi_3}\beta_1[k_2^2(\eta_1^2+\eta_2^2)\lambda-\eta_1^2(\eta_1^2+3\eta_2^2)\lambda+\beta_2^2(\eta_1^2+\eta_2^2)(\lambda+2\mu)]\right\}
\end{array}
$
\item[]
$
\begin{array}{l}
\Delta_{33}=\frac{1}{4\pi k_2^2\beta_1^2\beta_2^2}\left\{e^{(\beta_1+\beta_2)\xi_3}\eta_2\mu\left[e^{\beta_1\xi_3}\beta_2^3(\beta_1^2-\eta_1^2-\eta_2^2)
\lambda(-(\eta_1^2+\eta_2^2)\lambda+\beta_1^2(\lambda+2\mu))\right]\right.
\\[1pt]
\\
\left.+e^{\beta_2\xi_3}\beta_1\left[(\eta_1^2+\eta_1^2)\lambda(-2\beta_2^2\eta_1^2\lambda+\eta_1^2(-\eta_1^2+\eta_2^2)\lambda+\beta_2^4(\lambda+2\mu))\right.\right.
\\[1pt]
\\
\left.\left.+2\beta_1\beta_2\mu(2\eta_1^2(-\eta_1^2+\eta_2^2)\lambda-\beta_2^2(\eta_1^2+\eta_2^2)\lambda+\beta_2^4(\lambda+2\mu))\right.\right.
\\[1pt]
\\
\left.\left.+k_2^2\beta_2^2[(\eta_1^2+\eta_2^2)\lambda^2-\beta_1^2\lambda(\lambda+2\mu)+2\beta_1\beta_2\mu(\lambda+2\mu)]\right]\right\}
\end{array}
$
\end{itemize}

\end{proof}
\section{Conclusion}
\label{sec5}
The analytical solution  for the 3D frequency-dependent displacement Green's function  was obtained herein for a homogeneous viscoelastic and isotropic half-space. This can be viewed as a platform for the development of innovative boundary integral equation techniques. Specifically, they can be used for the numerical solution of various problems in mechanics under transient conditions which involve the half-space containing different type of heterogeneities that are either of natural or of artificial origin.

\textbf{Acknowledgment}

This work was partially supported by Grant No BG05M2OP001-1.001-0003, financed by the Bulgarian Science and Education for Smart Growth Operational Program (2014-2020) and co-financed by the European Union through the European Structural and Investment Funds.

\end{document}